\numberwithin{equation}{section}
\newtheorem{theorem}{Theorem}[section]
\newtheorem{lemma}[theorem]{Lemma}
\begin{document}
\author{Alexander E Patkowski}
\title{A general double sum identity, mock theta functions, and Bailey pairs}

\maketitle
\begin{abstract} We obtain some Bailey pairs associated with indefinite quadratic forms with the $\beta_n$ connected to a finite sum. A new general identity is given, which provides identities for $q$-hypergeometric series, including mock theta functions.\end{abstract}

\keywords{\it Keywords: \rm Bailey pairs; Mock theta functions; $q$-series.}

\subjclass{ \it 2010 Mathematics Subject Classification 33D15, 11F37}

\section{Introduction}
An area of continued development in $q$-hypergeometric series is obtaining Bailey pairs associated with indefinite quadratic forms. One of the primary motivators is to find further identities for Mock theta functions, holomorphic parts of particular weight $\frac{1}{2}$ harmonic Maass Forms [18]. Some of the substantial developments in this way include Andrews [1, 3], Zwegers [19], and Hickerson and Mortenson [11], where mock theta functions identities were developed. Crucial in this framework was use of Bailey pairs to associate these functions with indefinite quadratic forms. For example, in [1] we find the fifth order mock theta function identity
$$\sum_{n\ge 0}\frac{q^{n^2}}{(-q;q)_{n}}=\frac{1}{(q;q)_{\infty}}\sum_{n\ge0}q^{n(5n+1)/2}(1-q^{4n+2})\sum_{|j|\le n}(-1)^{j}q^{-j^2},$$
where as usual [8] $(w;q)_n=(w)_n:=(1-w)(1-wq)\cdots(1-wq^{n-1}).$ Many other authors have utilized Bailey pairs [3, 7, 10, 15], as well as created multisums related to Mock theta functions [10, 13, 14, 16].
\par The goal of the present paper to to consider certain double sums which involve the truncated form of the sum [8, pg.4, eq.(6.2)]
\begin{equation}\sum_{n\ge0}\frac{(a;q)_nz^n}{(q;q)_n}=\frac{(az;q)_{\infty}}{(z;q)_{\infty}}.\end{equation} Specifically, the finite sum considered by Fine [8, pg.17, eq.(15.4)].
To illustrate an interesting motivating example, recall the identity [8, pg.17, eq.(15.52)] 
\begin{equation}(z;q)_{\infty}\sum_{n\ge0}(aq;q)_nz^n=\sum_{n\ge0}(-z)^nq^{n(n+1)/2}\sum_{0\le k \le n}\frac{a^{n-k}}{(q)_{k}}.\end{equation}
Taking the limit $z\rightarrow1$ and setting $a=1$ we have
\begin{equation}\sum_{n\ge0}(-1)^nq^{n(n+1)/2}\sum_{0\le k \le n}\frac{1}{(q)_{k}}=(q)_{\infty}^2.\end{equation}
The product on the right side of (1.3) is a weight one modular form that has a connection to $\mathbb{Q}(\sqrt{3})$ [12], and has an indefinite quadratic form expansion due to L.J. Rogers. To interpret the left had side as a generating function we need some notation for partitions. Let $\#(\pi)$ be the number of parts of a partition $\pi,$ $L(\pi)=\pi_1$ be the largest part of a partition $\pi,$ and we say $\pi$ is a partition of $n$ if $\sum_{1\le i \le r}\pi_i=n$ and $\pi_1\ge \pi_2 \ge \cdots \ge \pi_r.$ \par Consider for $r\le N,$
\begin{equation}\frac{q^{N(N+1)/2}}{(q;q)_r}.\end{equation}
Clearly (1.4) generates a partition pair $(\lambda, \mu)$ where $\lambda$ is a triangular partition with $\#(\lambda)=L(\lambda)=N,$ and $\mu$ is a partition with $L(\mu)\le r.$ We take $\mu$ and enlarge it by creating a partition $\tau$ where $L(\tau)=\tau_1=L(\lambda)+L(\mu),$ the next part $\tau_2=N-1+\mu_2,$ and so on so that $\tau_i=N-(i-1)+\mu_i,$ for $1\le i\le N.$ (In the case $r=N,$ $\tau$ would be a partition into $N$ distinct parts.) Taking the conjugate partition, we create a $1$-to-$1$ correspondence to a partition $\varpi$ where $L(\varpi)=\max\{\#(\lambda), \#(\mu)\},$ and $\#(\varpi)=L(\lambda)+L(\mu).$ 

\begin{theorem} Let $r\le N.$ Define $p_{r,N}(n)$ to be the number of partitions of $n$ into parts $\le N$ where parts $\le r$ appear at least once, and parts $>r$ and $\le N$ appear exactly once if $r<N.$ For $n\equiv2\pmod{24},$ let $I(n)$ be the excess of the number of inequivalent solutions of $n=x^2-3y^2$ in which $x+3y\equiv4\pmod{12},$ over those in which $x+3y\equiv10\pmod{12}.$
Set $\omega(n)=\sum_{N\ge0, 0\le r\le N}(-1)^{N}p_{r,N}(n),$ then $\omega(n)=I(n).$
\end{theorem}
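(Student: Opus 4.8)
The plan is to reduce the statement to a coefficient identity for $(q)_\infty^2$ and then appeal to the classical indefinite quadratic form expansion of that product. First I would make the combinatorial content of $p_{r,N}(n)$ explicit as a coefficient extraction. The factor $q^{N(N+1)/2}$ is the generating function for the staircase partition $1+2+\cdots+N$, contributing each part $1,\dots,N$ exactly once, while $1/(q)_r=\prod_{j=1}^r(1-q^j)^{-1}$ contributes arbitrarily many copies of each part $\le r$. Superimposing the two, a part $j$ occurs at least once for $j\le r$, exactly once for $r<j\le N$, and not at all for $j>N$; this is precisely the description defining $p_{r,N}(n)$, and the bijection to $\varpi$ recorded before the statement is the combinatorial incarnation of this decomposition. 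Hence $p_{r,N}(n)=[q^n]\,q^{N(N+1)/2}/(q)_r$.

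Next I would assemble the alternating sum. Summing over the admissible range $0\le r\le N$ and then over $N$ with the sign $(-1)^N$ gives
$$\omega(n)=\sum_{N\ge0}(-1)^N[q^n]\,q^{N(N+1)/2}\sum_{0\le r\le N}\frac{1}{(q)_r}=[q^n]\sum_{N\ge0}(-1)^N q^{N(N+1)/2}\sum_{0\le r\le N}\frac{1}{(q)_r}.$$
The inner double sum is exactly the left-hand side of (1.3), so by (1.3) it collapses to $(q)_\infty^2$. Therefore $\omega(n)=[q^n](q)_\infty^2$, and the problem is reduced to identifying the coefficients of the weight-one product $(q)_\infty^2$.

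Finally I would invoke the indefinite theta expansion of $(q)_\infty^2$ due to Rogers, together with its interpretation over $\mathbb{Q}(\sqrt3)$ from [12]. After the normalization that turns $(q)_\infty^2$ into $\eta(24\tau)^2=q^2\prod_{m\ge1}(1-q^{24m})^2$ (so that all exponents lie in the class $2\bmod 24$), the coefficient of $q^n$ becomes a theta coefficient for the norm form $x^2-3y^2$ attached to a character of $\mathbb{Q}(\sqrt3)$, the character being the sign determined by $x+3y\bmod 12$, whose two relevant classes $4$ and $10$ differ by $6$. Matching these yields $[q^n](q)_\infty^2=I(n)$, hence $\omega(n)=I(n)$.

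I expect the last step to be the main obstacle. One must verify that the action of the fundamental unit of $\mathbb{Z}[\sqrt3]$ on the solutions of $n=x^2-3y^2$ is compatible with the congruence $x+3y\bmod 12$ (so that "inequivalent solutions" is well defined and the excess $I(n)$ is meaningful), and that the signs produced by the indefinite theta agree with the excess of the class $4$ over the class $10$. By contrast the combinatorial reduction leading to $\omega(n)=[q^n](q)_\infty^2$ is routine; it is the arithmetic bookkeeping of the congruence conditions and the passage $q\mapsto q^{24}$ that demands the real care.
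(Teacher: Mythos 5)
Your reduction of $\omega(n)$ to $[q^n](q)_\infty^2$ is exactly the paper's: reading $q^{N(N+1)/2}/(q)_r$ as a staircase superimposed with an arbitrary partition into parts $\le r$ is the content of the bijection to $\varpi$ sketched before the theorem statement, and summing with the sign $(-1)^N$ against (1.3) is the same collapse the paper performs. Up to that point your argument is complete and matches the intended proof.

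But the theorem is not yet proved, because the step you defer --- ``one must verify that the action of the fundamental unit \dots is compatible with the congruence'' --- is precisely the nontrivial content, and you neither carry it out nor cite a result that does. The paper closes this gap concretely. Writing the Rogers/[12] expansion in the normalized form $\sum_{k\ge0,\,2|l|\le k}(-1)^{k+l}q^{3(2k+1)^2-(6l+1)^2}$ (after the $q\mapsto q^{24}$, times $q^{2}$ normalization you describe), set $x=6l+1$, $y=2k+1$. Then $x+3y=6(k+l)+4$, so the sign $(-1)^{k+l}$ is $+1$ exactly when $x+3y\equiv4\pmod{12}$ and $-1$ when $x+3y\equiv10\pmod{12}$; and the summation constraint $2|l|\le k$ is equivalent to confining $(x,y)$ to the strip $-\frac{3}{2}y<x<\frac{3}{2}y$. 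The decisive ingredient is then Lemma 3 of Andrews--Dyson--Hickerson [2], applied with $D=3$, $x_1=y_1=1$: every equivalence class of solutions of the resulting Pell-type equation contains exactly one representative in that strip. That reduction-theory lemma is what makes ``inequivalent solutions'' a well-defined count that matches the restricted double sum; it also sidesteps the unit-compatibility issue you raise, since one simply evaluates $x+3y\bmod{12}$ on the canonical strip representative rather than proving the congruence class is invariant under the automorph. Without this lemma (or an equivalent statement), your final equality $[q^n](q)_\infty^2=I(n)$ remains an assertion, not a proof.
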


\begin{proof} The partition $\varpi$ described above tells us (1.4) is the generating function for $p_{r,N}(n).$ Therefore, combining (1.3) with the expansion found in [12, pg.77], we have 
\begin{equation}\sum_{n\ge0}\left(\sum_{N\ge0, 0\le r\le N}(-1)^{N}p_{r,N}(n)\right)q^{24n+2}=\sum_{k\in\mathbb{Z}, 2|l|\le k}(-1)^{l+k} q^{3(2k+1)^2-(6l+1)^2}.\end{equation}
The right hand side of (1.5) generates excess of the number of solutions of $2(1-12n)=x^2-3y^2$ where $x\equiv1\pmod{6},$ and $-\frac{3}{2}y<x<\frac{3}{2}y,$ in which $3y+x\equiv4\pmod{12},$ over those in which $3y+x\equiv10\pmod{12}.$ Note that $-\frac{k}{2}\le l \le \frac{k}{2}$ implies 
$$-\frac{3}{2}y<-\frac{3}{2}y+\frac{1}{2}=-\frac{6k}{2}+1\le x \le \frac{6k}{2}+1=\frac{3}{2}y-\frac{1}{2}<\frac{3}{2}y.$$ Setting $D=3,$ $y_1=1,$ $x_1=1$ in Lemma 3 of [2] tells us that each equivalence class of solutions to this equation has a unique $(x,y)$ such that $-\frac{3}{2}y<x<\frac{3}{2}y.$

\end{proof}

\section{Preliminaries for main identities}
A pair of sequences $(\alpha_n(a, q),\beta_n(a,q))$ is called a Bailey pair [17] relative to
$(a,q)$ if
\begin{equation}\beta_n(a,q)=\sum_{0\le j\le n}\frac{\alpha_j(a,q)}{(q;q)_{n-j}(aq;q)_{n+j}}.\end{equation}
\begin{lemma} ([17]) For a pair $(\alpha_n(a; q),\beta_n(a,q))$  satisfying (2.1), we have,
\begin{equation}\sum_{n\ge0}(X)_n(Y)_n(aq/XY)^n\beta_n=\frac{(aq/X)_{\infty}(aq/Y)_{\infty}}{(aq)_{\infty}(aq/XY)_{\infty}}\sum_{n\ge0}\frac{(X)_n(Y)_n(aq/XY)^n\alpha_n}{(aq/X)_n(aq/Y)_n}.\end{equation} \end{lemma}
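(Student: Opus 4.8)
The statement is the classical Bailey lemma [17], so the plan is to substitute the defining relation (2.1) for $\beta_n$ into the left-hand side and reduce the resulting double sum to the right-hand side by evaluating an inner summation. First I would write the left side as
$$\sum_{n\ge0}(X)_n(Y)_n(aq/XY)^n\sum_{0\le j\le n}\frac{\alpha_j}{(q)_{n-j}(aq)_{n+j}},$$
and then interchange the order of summation, replacing $n$ by $j+m$ with $m\ge0$, so that the expression becomes $\sum_{j\ge0}\alpha_j\sum_{m\ge0}(\cdots)$.

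Next I would split every Pochhammer symbol indexed by $j+m$ or $2j+m$ using the standard rules $(A)_{j+m}=(A)_j(Aq^j)_m$ and $(aq)_{2j+m}=(aq)_{2j}(aq^{2j+1})_m$. This pulls all the $j$-dependent factors outside the inner sum and leaves, for each fixed $j$, an inner series over $m$ of the form
$$\sum_{m\ge0}\frac{(Xq^j)_m(Yq^j)_m}{(q)_m(aq^{2j+1})_m}\left(\frac{aq}{XY}\right)^m.$$
The key observation is that this is a ${}_{2}\phi_{1}$ series whose argument $aq/XY$ is exactly $C/AB$ for $A=Xq^j$, $B=Yq^j$, $C=aq^{2j+1}$, which forces the applicability of the $q$-Gauss summation and evaluates the inner series in closed product form. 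I would then rewrite the resulting products $(aq^{j+1}/X)_\infty$, $(aq^{j+1}/Y)_\infty$, and $(aq^{2j+1})_\infty$ in terms of their $j=0$ values via $(Aq^j)_\infty=(A)_\infty/(A)_j$, which extracts the $j$-independent prefactor $(aq/X)_\infty(aq/Y)_\infty/[(aq)_\infty(aq/XY)_\infty]$ and reintroduces the denominators $(aq/X)_j(aq/Y)_j$ together with a factor $(aq)_{2j}$.

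Finally I would observe that the $(aq)_{2j}$ produced by the summation cancels the $(aq)_{2j}$ coming from the split denominator, leaving precisely $\sum_{j\ge0}(X)_j(Y)_j(aq/XY)^j\alpha_j/[(aq/X)_j(aq/Y)_j]$ multiplied by the extracted prefactor, which is exactly the right-hand side. The main obstacle is the middle step: justifying the interchange of summation, valid by absolute convergence for $|aq/XY|<1$, and above all recognizing the inner series as a $q$-Gauss sum, since the closed-form evaluation depends entirely on the argument matching the balancing condition $C/AB$. All subsequent manipulations are routine Pochhammer bookkeeping and telescoping of the infinite products.
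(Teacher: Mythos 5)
Your proof is correct: the interchange of summation, the shift $n=j+m$, the Pochhammer splittings, and the identification of the inner sum as a ${}_2\phi_1$ with argument $aq/XY=C/AB$ (so that the $q$-Gauss summation applies and the $(aq)_{2j}$ factors cancel) reproduce the right-hand side exactly. The paper itself offers no proof of this lemma, simply citing Warnaar [17], and the argument you give is precisely the standard derivation of this limiting form of Bailey's lemma found in the literature, so there is nothing to reconcile beyond noting the convergence condition $|aq/XY|<1$, which you do.
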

We also require [6, eq.(S2)], which says that if $(\alpha_n(a; q),\beta_n(a,q))$ is a Bailey pair, then so is $(\alpha_n'(a; q),\beta_n'(a,q))$ where,
\begin{equation}\alpha_n'(a,q)=a^{n/2}q^{n^2/2}\alpha_n(a,q),\end{equation}
\begin{equation}\beta_n'(a,q)=\frac{1}{(-\sqrt{aq};q)_{n}}\sum_{0\le k \le n}\frac{(-\sqrt{aq};q)_k}{(q)_{n-k}}a^{k/2}q^{k^2/2}\beta_{k}(a,q).\end{equation}
A related cousin that will be equally useful is [6, eq.(S5)], which tells us $(\alpha_n'(a; q),\beta_n'(a,q))$ is a Bailey pair where,
\begin{equation}\alpha_n'(a,q)=\frac{(-a^{1/2}q)_n}{(-a^{1/2})_n}a^{n/2}q^{(n^2-n)/2}\alpha_n(a,q),\end{equation}
\begin{equation}\beta_n'(a,q)=\frac{1}{(-\sqrt{a};q)_{n}}\sum_{0\le k \le n}\frac{(-\sqrt{a}q;q)_k}{(q)_{n-k}}a^{k/2}q^{(k^2-k)/2}\beta_{k}(a,q).\end{equation}

Generalizing [4, pg.139, Lemma 6.5.1], we can obtain another identity that will be key in obtaining our general identity.
\begin{lemma} For $-1\le b \le1,$ we have,
$$b^{N+1}\sum_{n\ge0}\frac{(aq)_n}{(bq)_n}q^{(N+1)n}=\frac{1-b}{1-q^{N+1}}\frac{(q)_{N+1}}{(\frac{a}{b}q)_{N+1}}\left(\frac{(aq)_{\infty}}{(b)_{\infty}}-\sum_{N\ge n\ge0}\frac{(\frac{a}{b}q)_nb^n}{(q)_n}\right).$$
\end{lemma}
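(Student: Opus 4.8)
The plan is to recognize the bracketed expression on the right as the tail of a $q$-binomial series and then to identify the resulting identity with a Heine transformation. First I would apply the $q$-binomial theorem (1.1), with $a$ replaced by $\frac{a}{b}q$ and $z$ replaced by $b$, to obtain $\sum_{n\ge0}\frac{(\frac{a}{b}q)_n b^n}{(q)_n}=\frac{(aq)_{\infty}}{(b)_{\infty}}$. Consequently the parenthetical factor $\frac{(aq)_{\infty}}{(b)_{\infty}}-\sum_{0\le n\le N}\frac{(\frac{a}{b}q)_n b^n}{(q)_n}$ collapses to the tail $\sum_{n\ge N+1}\frac{(\frac{a}{b}q)_n b^n}{(q)_n}$.

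Next I would reindex this tail by $n=m+N+1$ and peel off the first $N+1$ factors by the splitting $(x;q)_{m+N+1}=(x;q)_{N+1}(xq^{N+1};q)_m$, applied both to $x=\frac{a}{b}q$ and to $x=q$, together with $b^{m+N+1}=b^{N+1}b^m$. This produces the constant $\frac{(\frac{a}{b}q)_{N+1}\,b^{N+1}}{(q)_{N+1}}$ multiplying $\sum_{m\ge0}\frac{(\frac{a}{b}q^{N+2})_m b^m}{(q^{N+2})_m}$. The factor $\frac{(q)_{N+1}}{(\frac{a}{b}q)_{N+1}}$ already standing in front of the bracket cancels the $\frac{(\frac{a}{b}q)_{N+1}}{(q)_{N+1}}$ just created, so the entire right-hand side collapses to $\frac{1-b}{1-q^{N+1}}\,b^{N+1}\sum_{m\ge0}\frac{(\frac{a}{b}q^{N+2})_m b^m}{(q^{N+2})_m}$.

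After cancelling the common factor $b^{N+1}$, the lemma reduces to the single transformation $\sum_{n\ge0}\frac{(aq)_n}{(bq)_n}q^{(N+1)n}=\frac{1-b}{1-q^{N+1}}\sum_{m\ge0}\frac{(\frac{a}{b}q^{N+2})_m b^m}{(q^{N+2})_m}$. Reading the left side as ${}_2\phi_1(aq,q;bq;q,q^{N+1})$ and the right as ${}_2\phi_1(\frac{a}{b}q^{N+2},q;q^{N+2};q,b)$, I recognize this as Heine's second transformation applied with parameters $(a,b,c,z)=(aq,q,bq,q^{N+1})$: there one computes $c/b=b$, $bz=q^{N+2}$, and $abz/c=\frac{a}{b}q^{N+2}$, so the transformed series is exactly the right-hand one, while the product prefactor $\frac{(b;q)_{\infty}(q^{N+2};q)_{\infty}}{(bq;q)_{\infty}(q^{N+1};q)_{\infty}}$ telescopes to $\frac{1-b}{1-q^{N+1}}$ via $(b)_{\infty}=(1-b)(bq)_{\infty}$ and $(q^{N+1})_{\infty}=(1-q^{N+1})(q^{N+2})_{\infty}$.

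I expect the only real obstacle to be bookkeeping: choosing the correct Heine transformation (the second, rather than the first or third) and matching the four parameters so that the transformed argument is $b$ and its lower parameter is $q^{N+2}$, together with the cancellation of the shifted Pochhammer factors. If one prefers to avoid quoting the transformation, the displayed ${}_2\phi_1$ identity can be re-derived in the classical manner of Heine, by expanding a Pochhammer ratio via the $q$-binomial theorem (1.1) and interchanging the order of summation. Throughout, $|q|<1$ and $|b|<1$ guarantee convergence of every series and product, consistent with the stated range $-1\le b\le1$.
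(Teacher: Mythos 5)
Your argument is correct for $|b|<1$, and it is essentially the paper's own proof run in reverse. The paper starts from the left-hand side and applies Fine's transformation [8, pg.~5, eq.~(6.3)] with $t=q^{N+1}$, which is exactly the specialized identity you verify via Heine, namely $\sum_{n\ge0}\frac{(aq)_n}{(bq)_n}q^{(N+1)n}=\frac{1-b}{1-q^{N+1}}\sum_{m\ge0}\frac{(\frac{a}{b}q^{N+2})_m b^m}{(q^{N+2})_m}$; it then shifts the index by $N+1$, peels off the Pochhammer prefactors, writes the shifted series as the full series minus the partial sum, and evaluates the full series by the $q$-binomial theorem (1.1). Your four moves are the same ones in the opposite order, and your parameter check for Heine's second transformation ($c/b=b$, $bz=q^{N+2}$, $abz/c=\frac{a}{b}q^{N+2}$, prefactor telescoping to $\frac{1-b}{1-q^{N+1}}$) is accurate; quoting Heine rather than Fine is a cosmetic difference, since Fine's (6.3) is this Heine transformation for a ${}_2\phi_1$ with one upper parameter equal to $q$.

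The one genuine gap is your closing claim that $|q|<1$ and $|b|<1$ guarantee convergence ``consistent with the stated range $-1\le b\le1$.'' At the endpoints $b=\pm1$ your very first step fails: the series $\sum_{n\ge0}\frac{(\frac{a}{b}q)_n b^n}{(q)_n}$ diverges when $|b|=1$, because $\frac{(\frac{a}{b}q)_n}{(q)_n}\to\frac{(\frac{a}{b}q)_\infty}{(q)_\infty}\neq0$ generically, so the terms do not tend to zero and (1.1) cannot be invoked, nor can the tail $\sum_{n\ge N+1}$ be formed. This is not an ignorable corner case: the lemma is applied precisely at $b=-1$ (with $a=-x/q$) to obtain (3.3) in the proof of Theorem 3.1. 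The repair is the paper's one-line remark: after absorbing the factor $1-b$ into $\frac{(aq)_\infty}{(b)_\infty}$ via $(b)_\infty=(1-b)(bq)_\infty$, both sides are analytic in $b$ on a neighborhood of $[-1,1]$ (the poles $b=q^{-j}$ and $b=aq^{j}$ lying elsewhere for generic $a$), so the identity established for $|b|<1$ extends to $b=\pm1$ by analytic continuation. You should add that sentence; everything else in your write-up stands.
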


\begin{proof} From Fine's text [8, pg.5, eq.(6.3), $t=q^{N+1}$] we have
\begin{equation} \begin{aligned} \sum_{n\ge0}\frac{(aq)_n}{(bq)_n}q^{(N+1)n}&=\frac{1-b}{1-q^{N+1}}\sum_{n\ge0}\frac{(\frac{a}{b}q^{N+2})_n}{(q^{N+2})_n}b^n \\
&=\frac{1-b}{1-q^{N+1}}\frac{(q)_{N+1}}{(\frac{a}{b}q)_{N+1}b^{N+1}}\sum_{n\ge0}\frac{(\frac{a}{b}q)_{n+N+1}b^{n+N+1}}{(q)_{n+N+1}b^{N+1}}\\
&=\frac{1-b}{1-q^{N+1}}\frac{(q)_{N+1}}{(\frac{a}{b}q)_{N+1}b^{N+1}}\left(\sum_{n\ge0}\frac{(\frac{a}{b}q)_nb^n}{(q)_n}-\sum_{N\ge n\ge0}\frac{(\frac{a}{b}q)_nb^n}{(q)_n}\right)\\
&=\frac{1-b}{1-q^{N+1}}\frac{(q)_{N+1}}{(\frac{a}{b}q)_{N+1}b^{N+1}}\left(\frac{(aq)_{\infty}}{(b)_{\infty}}-\sum_{N\ge n\ge0}\frac{(\frac{a}{b}q)_nb^n}{(q)_n}\right).\end{aligned}\end{equation}
The result is valid for $b=\pm1$ by analytic continuation.
\end{proof}

 We are now ready to prove our main Bailey pairs.

\begin{theorem} Define the $A_n$ by (2.8). Then pair $(\alpha_n(q, q),\beta_n(q,q))$ is a Bailey pair where,
$$\alpha_n(q,q)=q^{-n(n+1)/2}A_n(q,q,b),$$
$$\beta_n(q,q)=\frac{1}{(-q)_n(bq)_n}\sum_{0\le k \le n}\frac{(b)_{k}}{(q)_k}(-1)^k.$$
Further we have $(\alpha_n(q, q),\beta_n(q,q))$ is a Bailey pair where,
$$\alpha_n(q,q)=\frac{(1+q^{1/2})}{(1+q^{n+1/2})}q^{-n^2/2}A_n(q,q,b),$$
$$\beta_n(q,q)=\frac{1}{(-q^{3/2})_n(bq)_n}\sum_{0\le k \le n}\frac{(b)_{k}}{(q)_k}(-q^{1/2})^k.$$
\end{theorem}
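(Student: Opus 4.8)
The plan is to realize the two stated pairs as the outputs of the conjugation operations (2.3)--(2.4) and (2.5)--(2.6), specialized to $a=q$, applied to suitable seed Bailey pairs relative to $(q,q)$. The decisive structural clue is the shape of the two $\alpha_n$. At $a=q$ the multiplier in (2.3) is $a^{n/2}q^{n^2/2}=q^{n(n+1)/2}$, while the one in (2.5) is $\frac{(-q^{3/2})_n}{(-q^{1/2})_n}q^{n^2/2}=\frac{1+q^{n+1/2}}{1+q^{1/2}}q^{n^2/2}$; these are precisely the reciprocals of the prefactors $q^{-n(n+1)/2}$ and $\frac{1+q^{1/2}}{1+q^{n+1/2}}q^{-n^2/2}$ attached to $A_n$ in the statement. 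Hence, writing $A_n=A_n(q,q,b)$ for the quantity fixed by (2.8), the first pair should be the (2.3)--(2.4)-image of the seed whose $\alpha$-component is $q^{-n(n+1)}A_n$, and the second the (2.5)--(2.6)-image of the seed whose $\alpha$-component is $\bigl(\tfrac{1+q^{1/2}}{1+q^{n+1/2}}\bigr)^2 q^{-n^2}A_n$.

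With the seeds fixed, the $\alpha$-side is pure bookkeeping: feeding the seed $\alpha$ through (2.3) (respectively (2.5)) returns exactly $q^{-n(n+1)/2}A_n$ (respectively $\frac{1+q^{1/2}}{1+q^{n+1/2}}q^{-n^2/2}A_n$) by the multiplier computation above, so nothing is hidden there once (2.8) is in hand. The content sits entirely on the $\beta$-side. Transformation (2.4) at $a=q$ produces
\[ \beta_n=\frac{1}{(-q)_n}\sum_{0\le k\le n}\frac{(-q)_k}{(q)_{n-k}}q^{k(k+1)/2}\beta_k^{s}, \]
and (2.6) produces $\beta_n=\frac{1}{(-q^{1/2})_n}\sum_{0\le k\le n}\frac{(-q^{3/2})_k}{(q)_{n-k}}q^{k^2/2}\beta_k^{s}$. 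I would choose the seed $\beta_k^{s}$ carrying the matching $(-q)_k$ (respectively $(-q^{3/2})_k$) in its denominator so that these factors cancel inside the convolution; the remaining $n$-dependence then rearranges, using $\frac{1}{(-q^{1/2})_n}=\frac{1}{(-q^{3/2})_n}\cdot\frac{1+q^{1/2}}{1+q^{n+1/2}}$, into the normalizations $(-q)_n$ and $(-q^{3/2})_n$ recorded in the statement.

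The main obstacle is the collapse of these convolutions to the single finite sums $\frac{1}{(-q)_n(bq)_n}\sum_{0\le k\le n}\frac{(b)_k}{(q)_k}(-1)^k$ and $\frac{1}{(-q^{3/2})_n(bq)_n}\sum_{0\le k\le n}\frac{(b)_k}{(q)_k}(-q^{1/2})^k$. This is where the parameter $b$ and the factor $(bq)_n$ must be produced, and it is exactly the step the finite-sum machinery is built for: each inner sum is a truncation of the $q$-binomial series (1.1), and after interchanging the two summations the tail is resummed by Lemma 2.2 (the $t=q^{N+1}$ truncation of (1.1)), leaving precisely the advertised finite sum with $b$ entering through $(b)_k/(q)_k$. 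Concretely I would first verify the $n=0$ and $n=1$ instances by hand to fix the normalization, then carry out the interchange-and-resum step in general; as in Lemma 2.2 the endpoint values $b=\pm1$ come for free by analytic continuation. A reassuring consistency check is that, substituting either pair into the Bailey transform of Lemma 2.1 and specializing $X,Y$, one should recover indefinite-quadratic-form identities of the mock-theta type advertised in the introduction.
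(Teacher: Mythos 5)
Your identification of the relevant machinery is right --- (2.3)--(2.4) and (2.5)--(2.6) at $a=q$, with multipliers $q^{n(n+1)/2}$ and $\frac{1+q^{n+1/2}}{1+q^{1/2}}q^{n^2/2}$, both computed correctly --- but you orient the transformations the wrong way, and this is a genuine gap, not a cosmetic one. By treating the stated pair as the \emph{image}, you are forced into seeds with $\alpha$-components $q^{-n(n+1)}A_n$ and $\bigl(\tfrac{1+q^{1/2}}{1+q^{n+1/2}}\bigr)^2q^{-n^2}A_n$, which are not known Bailey pairs; worse, you propose to ``choose'' the seed $\beta_k^{s}$ so that factors cancel, but (2.1) is invertible, so once the seed $\alpha$ is fixed its $\beta$ is uniquely determined and cannot be chosen. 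Proving that such a seed is a Bailey pair is exactly as hard as the theorem itself, so the plan is circular. The resolution (and the paper's argument) is to run the chain forward with the \emph{stated} pair as the seed: take $\beta_n$ as in the theorem, let $\alpha_n$ be its unique partner under (2.1), and apply (2.3)--(2.4) with $a=q$. The output $\beta'_n=\frac{1}{(-q)_n}\sum_{0\le k\le n}\frac{(-q)_k}{(q)_{n-k}}q^{k(k+1)/2}\beta_k$ then collapses to $1/(bq)_n=B_n(q,q,b)$, and since $(A_n(q,q,b),1/(bq)_n)$ is Andrews' known pair (2.8)--(2.9), uniqueness of $\alpha$ given $\beta$ forces $q^{n(n+1)/2}\alpha_n=A_n(q,q,b)$, i.e.\ the stated $\alpha_n=q^{-n(n+1)/2}A_n(q,q,b)$; the second pair is identical with (2.5)--(2.6). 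Note that in this direction the known pair sits at the output, so only the forward implication of [6, (S2)/(S5)] is ever needed.

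Your second misstep is the identity driving the collapse. It is not Lemma 2.2 nor a tail-resummation of (1.1) --- Lemma 2.2 is the tool used later, in Theorem 3.1, to handle the inner $k$-sum of $\beta_n$ inside Bailey's lemma. What is needed here is Fine's identity (2.10), i.e.\ [8, pg.~17, eq.~(15.5)]: multiplying by $(t)_\infty^{-1}$ and equating coefficients of $t^N$ gives (2.11), and the specializations $a=-1$ (for the first pair) and $a=-q^{-1/2}$ (for the second) are precisely the statements that the two convolutions above equal $(-q)_N/(bq)_N$ and $(-q^{1/2})_N/(bq)_N$ respectively, which after dividing by the prefactors $1/(-q)_N$, $1/(-q^{1/2})_N$ yield $1/(bq)_N$ in both cases. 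Since (2.11) is an identity in the free parameter $b$ (a polynomial identity after clearing denominators), your appeal to analytic continuation for $b=\pm1$ is unnecessary here; that device belongs to Lemma 2.2, where convergence of an infinite series is at stake.
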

\begin{proof} To obtain this pair, recall the one parameter Bailey pair $(A_n(aq,q,b),B_n(aq,q,b))$ (relative to $(q,q)$) due to Andrews [1, Lemma 6] 
\begin{equation}A_n(aq,q,b)=\frac{(-1)^n(1-aq^{2n+1})a^nq^{n(3n-1)/2}b^n(aq/b)_n}{(1-aq)(bq)_n}\left(1+\sum_{1\le j\le n}\frac{(aq)_{j-1}(1-aq^{2j})(b)_ja^{-j}q^{-j^2}b^{-j}}{(q)_j(aq/b)_j}\right), \end{equation}
\begin{equation}B_n(aq,q,b)=\frac{1}{(bq)_n}.\end{equation}
Setting $a=q$ in (2.3)--(2.4), if we suppose that the left side is given by (2.8)--(2.9) with $a=1,$ we find that we would require the first pair in Theorem 2.3, by the uniqueness of Bailey pairs in conjunction with the identity of Fine [8, pg.17, eq.(15.5)]
\begin{equation}(t)_{\infty}\sum_{n\ge0}\frac{(aq)_n}{(bq)_n}t^n=\sum_{n\ge0}\frac{(-at)^nq^{n(n+1)/2}}{(bq)_n}\sum_{0\le k\le n}\frac{(b)_k}{(q)_k}a^{-k}.\end{equation} To see this, multiply both sides of (2.10) by $(t)_{\infty}^{-1}$ and equate coefficients of $t^n$ to obtain,
\begin{equation}\frac{(aq)_N}{(bq)_N}=\sum_{0\le n \le N}\frac{(-a)^nq^{n(n+1)/2}}{(q)_{N-n}(bq)_n}\sum_{0\le k\le n}\frac{(b)_k}{(q)_k}a^{-k}\end{equation}
and then set $a=-1.$ The second Bailey pair in Theorem 2.3 follows in the same way, with the difference being that we apply (2.5)--(2.6) (with $a=q$) and then put $a=-q^{-1/2}$ in (2.11).
\end{proof}
\section{Main identities}
Having constructed a sufficient number of lemmas in the previous section, we are now ready to state and prove our main identities.

\begin{theorem} We have,
\begin{equation}\frac{(-x)_{\infty}}{2(-q)_{\infty}}\sum_{n\ge0}\frac{(X)_n(Y)_n}{(-q)_n(xq)_n}(q^2/XY)^n\end{equation}
$$+(1-x)\frac{1}{2}\sum_{n\ge0}\frac{(X)_n(Y)_n}{(q^2;q^2)_n}(-q^2/XY)^n\sum_{k\ge0}\frac{(-x)_k}{(-q)_k}q^{(n+1)k}$$
$$=\frac{(q^2/X)_{\infty}(q^2/Y)_{\infty}}{(q^2)_{\infty}(q^2/XY)_{\infty}}\sum_{n\ge0}\frac{(X)_n(Y)_n(q^2/XY)^nq^{-n(n+1)/2}A_n(q,q,x)}{(q^2/X)_n(q^2/Y)_n}.$$
Further, we have that,
\begin{equation}\frac{(-xq^{1/2})_{\infty}}{(-q^{1/2})_{\infty}}\sum_{n\ge0}\frac{(X)_n(Y)_n}{(-q^{3/2})_n(xq)_n}(q^2/XY)^n\end{equation}
$$+\sum_{n\ge0}\frac{(X)_n(Y)_n(xq^{1/2})_{n+1}}{(-q^{3/2})_n(q)_n(xq)_n}(-q^{5/2}/XY)^n\sum_{k\ge0}\frac{(-xq^{1/2})_k}{(-q^{1/2})_k}q^{(n+1)k}$$
$$=(1+q^{1/2})\frac{(q^2/X)_{\infty}(q^2/Y)_{\infty}}{(q^2)_{\infty}(q^2/XY)_{\infty}}\sum_{n\ge0}\frac{(X)_n(Y)_n(q^2/XY)^nq^{-n^2/2}A_n(q,q,x)}{(1+q^{n+1/2})(q^2/X)_n(q^2/Y)_n}.$$
 \end{theorem}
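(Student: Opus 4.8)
\emph{Proof proposal.} The plan is to read off both identities as instances of Bailey's lemma (Lemma 2.1) applied, with $a=q$ (so $aq=q^2$), to the two Bailey pairs furnished by Theorem 2.3, and then to convert the truncated sum sitting inside $\beta_n$ into the closed form supplied by Lemma 2.2. For the first identity, substituting $\alpha_n(q,q)=q^{-n(n+1)/2}A_n(q,q,x)$ into the right-hand side of (2.2) reproduces verbatim the right-hand side of (3.1), so no work is needed there. All the content is on the left: with $\beta_n=\frac{1}{(-q)_n(xq)_n}\sum_{0\le k\le n}\frac{(x)_k}{(q)_k}(-1)^k$, the left-hand side of (2.2) is $\sum_{n\ge0}\frac{(X)_n(Y)_n(q^2/XY)^n}{(-q)_n(xq)_n}S_n$ with $S_n=\sum_{0\le k\le n}\frac{(x)_k}{(q)_k}(-1)^k$.

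First I would evaluate $S_N$ via Lemma 2.2. Matching the Lemma's finite sum $\sum_{N\ge n\ge0}\frac{(\frac{a}{b}q)_nb^n}{(q)_n}$ to $S_N$ forces $b=-1$ and $\frac{a}{b}q=x$, hence $a=-x/q$, $aq=-x$; since $(-1;q)_\infty=2(-q;q)_\infty$ this yields
$$S_N=\frac{(-x)_\infty}{2(-q)_\infty}+(-1)^N\frac{(x)_{N+1}}{2(q)_N}\sum_{n\ge0}\frac{(-x)_n}{(-q)_n}q^{(N+1)n}.$$
Feeding this back and using $(x)_{N+1}=(1-x)(xq)_N$, $(q)_N(-q)_N=(q^2;q^2)_N$, and $(-1)^N(q^2/XY)^N=(-q^2/XY)^N$, the $\frac{(-x)_\infty}{2(-q)_\infty}$ term becomes the first single sum on the left of (3.1) and the remaining term becomes precisely the second, double, sum. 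This closes the first identity.

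For the second identity I would run the same machinery on the second Bailey pair: substituting $\alpha_n(q,q)=\frac{1+q^{1/2}}{1+q^{n+1/2}}q^{-n^2/2}A_n(q,q,x)$ into (2.2) gives the right-hand side of (3.2) after pulling out the factor $1+q^{1/2}$, while the left-hand side involves the truncated sum $T_N=\sum_{0\le k\le N}\frac{(x)_k}{(q)_k}(-q^{1/2})^k$. Here Lemma 2.2 is applied with $b=-q^{1/2}$ and $\frac{a}{b}q=x$, so $aq=-xq^{1/2}$ and $bq=-q^{3/2}$, producing a companion infinite sum $\sum_n\frac{(-xq^{1/2})_n}{(-q^{3/2})_n}q^{(N+1)n}$ together with the product $\frac{(-xq^{1/2})_\infty}{(-q^{1/2})_\infty}$ that heads the left-hand side of (3.2).

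I expect the main obstacle to be the final bookkeeping in the second identity. Whereas the first identity collapses cleanly through $(x)_{N+1}=(1-x)(xq)_N$ and $(q)_N(-q)_N=(q^2;q^2)_N$, the half-integer shifts in the second case must be reconciled to reach the stated form: in particular the naturally occurring $(-q^{3/2})_k$ in the companion sum must be converted into the $(-q^{1/2})_k$ appearing in (3.2), via $(-q^{1/2})_k=(1+q^{1/2})(-q^{3/2})_k/(1+q^{k+1/2})$, with the $1+q^{k+1/2}$ factors tracked against the $1+q^{n+1/2}$ denominators coming from $\alpha_n$, and the prefactor $(xq^{1/2})_{n+1}$ assembled after re-indexing. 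Keeping the powers of $q^{1/2}$ and the sign conventions aligned through these manipulations is the delicate part; the structural skeleton, however, is identical to the first case.
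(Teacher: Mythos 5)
Your overall strategy is exactly the paper's proof: apply Bailey's lemma (Lemma 2.1) with $a=q$ to the two pairs of Theorem 2.3, and evaluate the truncated sum inside $\beta_n$ by specializing Lemma 2.2. For the first identity your execution is complete and correct: the choice $b=-1$, $a=-x/q$ reproduces the paper's equation (3.3), and your simplifications $(x)_{N+1}=(1-x)(xq)_N$, $(q)_N(-q)_N=(q^2;q^2)_N$, $(-1)^N(q^2/XY)^N=(-q^2/XY)^N$ assemble (3.1) exactly as the paper does.

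The gap is in your final ``bookkeeping'' step for the second identity, and it is not repairable as described. Your application of Lemma 2.2 with $b=-q^{1/2}$, $a=-x/q^{1/2}$ is right and gives $T_N=\frac{(-xq^{1/2})_\infty}{(-q^{1/2})_\infty}+q^{1/2}\frac{(x)_{N+1}}{(1+q^{1/2})(q)_N}(-q^{1/2})^N\sum_{n\ge0}\frac{(-xq^{1/2})_n}{(-q^{3/2})_n}q^{(N+1)n}$, but from there the conversion you propose cannot work: the identity $(-q^{1/2})_k=(1+q^{1/2})(-q^{3/2})_k/(1+q^{k+1/2})$ leaves a $k$-dependent factor $(1+q^{k+1/2})$ \emph{inside} the inner sum, which no constant can absorb, while the factors $(1+q^{n+1/2})^{-1}$ coming from $\alpha_n$ live on the opposite side of Bailey's lemma, indexed by the outer variable of the right-hand side, where they already appear verbatim in (3.2) --- they are not available to cancel anything in the left-hand inner sum. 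Likewise no re-indexing turns $(x)_{n+1}$ into $(xq^{1/2})_{n+1}$, and your derivation produces a prefactor $q^{1/2}/(1+q^{1/2})$ that the printed (3.2) lacks. What your computation actually proves is (3.2) with second term $\frac{q^{1/2}}{1+q^{1/2}}\sum_{n\ge0}\frac{(X)_n(Y)_n(x)_{n+1}}{(-q^{3/2})_n(q)_n(xq)_n}(-q^{5/2}/XY)^n\sum_{k\ge0}\frac{(-xq^{1/2})_k}{(-q^{3/2})_k}q^{(n+1)k}$, and the evidence is that this corrected form, not the printed one, is true: the paper's own intermediate (3.4), which carries $(xq^{1/2})_{N+1}$ and $(-q^{1/2})_n$ where your derivation (correctly) yields $(x)_{N+1}$ and $(-q^{3/2})_n$, already fails numerically at $x=0$, $N=0$, and the printed (3.2) is inconsistent even with the paper's own consequence (3.8) by a factor of $q^{1/2}$. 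So the right move is to stop at your ``naturally occurring'' form and flag the printed statement as misstated, rather than to promise a reconciliation that would fail.
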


\begin{proof} Setting $b=-1$ and $a=-x/q$ in Lemma 2.2, we obtain

\begin{equation}\sum_{N\ge n\ge0}\frac{(x)_n(-1)^n}{(q)_n}=\frac{(-x)_{\infty}}{2(-q)_{\infty}}+\frac{(x)_{N+1}}{2(q)_{N}}(-1)^N\sum_{n\ge0}\frac{(-x)_n}{(-q)_n}q^{(N+1)n}.\end{equation}
Then, inserting the first Bailey pair in Theorem 2.3 into Lemma 2.1, and using (3.3), we find (3.1). Similary, putting $b=-q^{1/2}$ and $a=-x/q^{1/2}$ in Lemma 2.2, we obtain 
\begin{equation}\sum_{N\ge n\ge0}\frac{(x)_n(-q^{1/2})^n}{(q)_n}=\frac{(-xq^{1/2})_{\infty}}{(-q^{1/2})_{\infty}}+q^{1/2}\frac{(xq^{1/2})_{N+1}}{(1+q^{1/2})(q)_{N}}(-q^{1/2})^N\sum_{n\ge0}\frac{(-xq^{1/2})_n}{(-q^{1/2})_n}q^{(N+1)n}. \end{equation}Then, inserting the second Bailey pair in Theorem 2.3 into Lemma 2.1, and using (3.4), we find (3.2).

\end{proof}
A nice consequence of Theorem 3.1 include identities for Mock theta functions.

\begin{theorem} We have for $-1\le x\le 1,$
\begin{equation} \frac{1}{2}\frac{(-x)_{\infty}}{(-q)_{\infty}}\sum_{n\ge0}\frac{q^{n(n+1)}}{(-q)_n(xq)_n}+(1-x)\frac{1}{2}\sum_{n\ge0}\frac{(-1)^nq^{n(n+1)}}{(q^2;q^2)_n}\sum_{k\ge0}\frac{(-x)_k}{(-q)_k}q^{(n+1)k}\end{equation}
$$=\frac{1}{(q)_{\infty}}\sum_{n\ge0}q^{n^2/2+n/2}A_n(q,q,x)=\frac{1}{(-q)_{\infty}}\sum_{n\ge0}\frac{(-q)_n}{(xq)_{n}}q^{n(n+1)/2}.$$

\begin{equation} \frac{1}{2(-q)_{\infty}}\sum_{n\ge0}\frac{q^{n(n+1)}}{(-q)_n}+\frac{1}{2}\sum_{n\ge0}\frac{q^n(q^{n+2};q^2)_{\infty}}{(-q)_n}=\frac{1}{(-q)_{\infty}}\sum_{n\ge0}(-q)_nq^{n(n+1)/2}.\end{equation}

\begin{equation} \frac{1}{2}\frac{(-q;q^2)_{\infty}}{(-q^2;q^2)_{\infty}}\sum_{n\ge0}\frac{q^{2n(n+1)}}{(-q)_{2n+1}}+\frac{1}{2}\sum_{n\ge0}\frac{(-1)^nq^{2n(n+1)}}{(q^4;q^4)_n}\sum_{k\ge0}\frac{(-q;q^2)_k}{(-q^2;q^2)_k}q^{2(n+1)k}\end{equation}
$$=\frac{1}{(-q^2;q^2)_{\infty}}\sum_{n\ge0}\frac{(-q^2;q^2)_n}{(-q;q^2)_{n+1}}q^{n(n+1)}.$$

\begin{equation}\frac{(-xq^{1/2})_{\infty}}{(-q^{1/2})_{\infty}}\sum_{n\ge0}\frac{q^{n(n+1)}}{(-q^{1/2})_{n+1}(xq)_n}\end{equation}
$$+q^{1/2}\sum_{n\ge0}\frac{(xq^{1/2})_{n+1}}{(-q^{1/2})_{n+1}(q)_n(xq)_n}(-1)^nq^{n(n+3/2)}\sum_{k\ge0}\frac{(-xq^{1/2})_k}{(-q^{1/2})_k}q^{(n+1)k}$$
$$=\frac{1}{(q^2)_{\infty}}\sum_{n\ge0}\frac{q^{n^2/2+n}A_n(q,q,x)}{(1+q^{n+1/2})}=\frac{1}{(-q^{1/2})_{\infty}}\sum_{n\ge0}\frac{(-q^{1/2})_n}{(xq)_{n}}q^{n(n+2)/2}.$$

\begin{equation}\frac{1}{(-q;q^2)_{\infty}}\sum_{n\ge0}\frac{q^{2n(n+1)}}{(-q;q^2)_{n+1}}+q\sum_{n\ge0}\frac{(-1)^nq^{n(2n+3)}}{(-q;q^2)_{n+1}(q^2;q^2)_n}\sum_{k\ge0}\frac{q^{2(n+1)k}}{(-q;q^2)_k}\end{equation}
$$=\frac{1}{(-q;q^2)_{\infty}}\sum_{n\ge0}(-q;q^2)_nq^{n(n+2)}.$$

\begin{equation} \frac{(q;q^2)_{\infty}}{(-q;q^2)_{\infty}}\sum_{n\ge0}\frac{q^{2n(n+1)}}{(-q)_{2n+1}}+q\sum_{n\ge0}\frac{(-1)^nq^{n(2n+3)}}{(q^4;q^4)_n}\sum_{k\ge0}\frac{(q;q^2)_k}{(-q;q^2)_k}q^{2(n+1)k}\end{equation}
$$=\frac{1}{(-q;q^2)_{\infty}}\sum_{n\ge0}\frac{(-q;q^2)_n}{(-q^2;q^2)_{n}}q^{n(n+2)}.$$

\begin{equation}\frac{(-q^2;q^2)_{\infty}}{(-q;q^2)_{\infty}}\sum_{n\ge0}\frac{q^{2n(n+1)}}{(-q^;q^2)_{n+1}^2}\end{equation}
$$+q\sum_{n\ge0}\frac{(-q^2;q^2)_{n+1}}{(-q;q^2)_{n+1}^2(q^2;q^2)_n}(-1)^nq^{n(2n+3)}\sum_{k\ge0}\frac{(-q^2;q^2)_k}{(-q;q^2)_k}q^{2(n+1)k}$$
$$=\frac{1}{(-q;q^2)_{\infty}}\sum_{n\ge0}\frac{q^{n(n+2)}}{1+q^{2n+1}}.$$

\end{theorem}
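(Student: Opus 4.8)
The plan is to read every one of (3.5)--(3.11) as a specialization of the two master identities (3.1) and (3.2) of Theorem 3.1, so that no new summation machinery is needed: only confluent limits, the base change $q\mapsto q^2$, and choices of $x$. First I would produce the general-$x$ ``parent'' identities (3.5) and (3.8) by letting $X,Y\to\infty$ in (3.1) and (3.2) respectively. The only analytic inputs are the standard asymptotics $(X)_n(Y)_n(q^2/XY)^n\to q^{n(n+1)}$, $(q^2/X)_n(q^2/Y)_n\to 1$, and $(q^2/X)_\infty(q^2/Y)_\infty/(q^2/XY)_\infty\to 1$, applied termwise, which collapse the $X,Y$-sums to the $q^{n(n+1)}$- and $q^{n(n+1)/2}$-weighted sums displayed. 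The threefold equality in (3.5) and (3.8) (two-term $\beta$-side $=$ indefinite-form $A_n$-side $=$ product-quotient closed form) is inherited directly: the first equality \emph{is} Theorem 3.1, while the last equality is the Fine re-summation (2.10)--(2.11) already exploited in proving Theorem 2.3.

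With the parents in hand, each named corollary becomes a substitution. The cleanest are $x=0$: then $(xq)_n=1$ and $(-x)_\infty=1$, and (3.5), (3.8) reduce at once to (3.6) and (3.9). For the base-$q^2$ identities I would first apply $q\mapsto q^2$ and then fix $x$ by \emph{matching the inner finite sum}. In (3.5) the inner ratio $\frac{(-x)_k}{(-q)_k}$ becomes $\frac{(-x;q^2)_k}{(-q^2;q^2)_k}$, forcing $x=q$ in (3.7); in (3.8) the ratio $\frac{(-xq^{1/2})_k}{(-q^{1/2})_k}$ becomes $\frac{(-xq;q^2)_k}{(-q;q^2)_k}$, forcing $x=-1$ in (3.10) and $x=q$ in the final identity (3.11). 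The products are then consolidated by the even/odd splitting $(-q)_{2n+1}=(-q;q^2)_{n+1}(-q^2;q^2)_n$ and its relatives. The identity (3.10) is the clean template: at $x=-1$ one has $(xq^2;q^2)_n=(-q^2;q^2)_n$, so the term-$1$ denominator $(-q;q^2)_{n+1}(-q^2;q^2)_n$ recombines \emph{exactly} into $(-q)_{2n+1}$, which is how its stated form arises, with no singularity in $A_n(q,q,-1)$.

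The final identity (3.11) is the $x=q$, $q\mapsto q^2$ case of the second-family parent (3.8), and this is where the genuine difficulty lies: $x=q$ is a \emph{singular} value of the indefinite-form coefficient $A_n(q,q,x)$ of (2.8). As $x\to q$ one has $(q/x)_n\to(1;q)_n=0$ in the numerator of $A_n$, while $(q/x)_j\to(1;q)_j=0$ sits in the denominator of its inner $j$-sum, so $A_n$ presents as $0/0$. I would therefore set $q/x=1+\eta$ and expand to first order, using $(1+\eta;q)_m\sim-\eta\,(q;q)_{m-1}$ for $m\ge1$; the factors of $\eta$ cancel between the vanishing prefactor and the divergent inner sum, leaving a finite sum of shape $\sum_{1\le j\le n}(1-q^{2j})q^{-j(j+1)}$ times a rational prefactor. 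I expect this surviving sum to telescope, after the base change, into the factor $\frac{1}{1+q^{2n+1}}$ on the right of (3.11) and, combined with the $(xq)_n$-type factor, into the squared product $(-q;q^2)_{n+1}^2$ on the left.

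The main obstacle throughout is the control of $A_n(q,q,x)$. For the generic and $x=0,-1$ specializations the verification reduces to Fine's identity together with routine product bookkeeping; but for the singular value $x=q$ underlying (3.11) one must compute the limit of $A_n$ explicitly and then prove that the resulting finite sum collapses to the claimed closed form. The delicate point I would check most carefully is reconciling the $(q^3;q^2)_n$-type factor that emerges literally from $(xq^2;q^2)_n$ at $x=q$ with the clean $(-q;q^2)_{n+1}$-type factors appearing in the stated right-hand side, since it is precisely here that a genuine telescoping, rather than a naive cancellation of products, must be exhibited.
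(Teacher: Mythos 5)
Your overall skeleton agrees with the paper's proof: (3.5) and (3.8) are obtained by letting $X,Y\to\infty$ in (3.1)--(3.2) of Theorem 3.1, and the remaining identities are specializations in $x$ combined with $q\mapsto q^2$; your choices $x=0$ for (3.6), (3.9) and $x=-1$ for (3.10) are correct (indeed (3.10) checks exactly at $x=-1$, a case the paper's proof text silently omits, so you have filled a genuine gap there). But your handling of (3.7) and (3.11) is wrong, and the elaborate singular-limit program of your last two paragraphs is the symptom. The paper obtains both (3.7) and (3.11) as the $x=-q^{1/2}$ cases (so $x=-q$ after $q\mapsto q^2$), which are perfectly \emph{regular} substitutions: with $b=-q$ in base $q^2$ one has $(q^2/b;q^2)_n=(-q;q^2)_n\neq0$, so $A_n$ needs no limit at all. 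Your choice $x=q$ is internally inconsistent. If you mean $x=q$ in base $q^2$ (the reading that matches the printed inner ratio $(-q^2;q^2)_k/(-q;q^2)_k$), then $A_n$ involves $(q;q^2)_n$, which never vanishes, so there is no $0/0$ and your $\eta$-expansion attacks a nonexistent singularity. If instead you mean $x=q$ in base $q$ before the base change (the reading under which $(q/x)_n=(1;q)_n=0$, as you compute), then after $q\mapsto q^2$ you have $x=q^2$ and the inner ratio becomes $(-q^3;q^2)_k/(-q;q^2)_k$, contradicting the very matching criterion you used to fix $x$. Decisively, neither reading reaches the stated right-hand sides: the Appell--Lerch factor $1/(1+q^{2n+1})$ in (3.11), and the $(-q;q^2)_{n+1}$ in (3.7), force $(xq^2;q^2)_n=(-q^3;q^2)_n$, i.e.\ $x=-q$, via $(-q;q^2)_n/(-q^3;q^2)_n=(1+q)/(1+q^{2n+1})$; at $x=q$ one gets $(q^3;q^2)_n$ instead and that structure is lost. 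What misled you is that the printed (3.7) and (3.11) contain sign typos (at $x=-q$ the inner ratios should read $(q;q^2)_k/(-q^2;q^2)_k$ and $(q^2;q^2)_k/(-q;q^2)_k$, and stray factors of $1+q$ float around), but the right-hand sides --- which carry the mock theta content the paper cites ($T_1(q)$, the order-three function) --- unambiguously pin down $x=-q^{1/2}$.

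Two smaller corrections. First, the third member of (3.5)/(3.8) is not ``the Fine re-summation (2.10)--(2.11)'': that identity is what identifies the $\beta_n$'s in Theorem 2.3, and it cannot produce a single sum weighted by $q^{n(n+1)/2}$. The paper invokes the indefinite-theta transformation of [1]; equivalently, apply Lemma 2.1 to Andrews' \emph{original} pair (2.8)--(2.9) with $Y\to\infty$ and finite $X=-q$ (resp.\ $X=-q^{1/2}$), which gives $\frac{1}{(q^2)_\infty}\sum_{n\ge0}q^{n(n+1)/2}A_n(q,q,x)=\frac{1}{(-q)_\infty}\sum_{n\ge0}\frac{(-q)_n}{(xq)_n}q^{n(n+1)/2}$ and the analogous identity with the weight $1/(1+q^{n+1/2})$. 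Second, (3.6) does not follow ``at once'' from $x=0$: the second term still requires an Euler summation, $\sum_{n\ge0}(-1)^nq^{n^2+n+nk}/(q^2;q^2)_n=(q^{k+2};q^2)_\infty$, after interchanging the two sums --- routine, but it should be stated.
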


\begin{proof} For (3.5) Let $X,Y\rightarrow\infty$ in (3.1), and then invoke the transformation through the indefinite theta expansion [1]. Equation (3.6) is the $x=0$ case of (3.5). Equation (3.7) is the $x=-q^{1/2}$ case of (3.5). For (3.8) Let $X,Y\rightarrow\infty$ in (3.2), and then invoke the indefinite theta expansion [1]. Equation (3.9) is the $x=0$ case of (3.8). Equation (3.11) is the $x=-q^{1/2}$ case of (3.8).

\end{proof}
First, we remark on our \begin{equation}\sum_{n\ge0}\frac{q^{n(n+1)}}{(-q)_n(xq)_n}\sum_{0\le k\le n}\frac{(x)_k}{(q)_k}(-1)^k,\end{equation}
and
\begin{equation}\sum_{n\ge0}\frac{q^{n(n+1)}}{(-q^{1/2})_{n+1}(xq)_n}\sum_{0\le k\le n}\frac{(x)_k}{(q)_k}(-1)^k.\end{equation}
Equation (3.12) is mixed mock modular forms for $x=0, -q^{1/2},$ since multiplying by $(-q)_{\infty}$ gives a mock modular form, and for $x=-1$ we have a modular form, $(q^2;q^2)_{\infty}.$ 
Similarly, equation (3.13) is mixed mock modular forms for $x=0, -1,$ since multiplying by $(-q^{1/2})_{\infty}$ gives a mock modular form, and for $x=-q^{-1/2}$ we have a modular form $q^{-1}((q^2;q^2)_{\infty}/(q;q^2)_{\infty}-1)/(-q^{1/2})_{\infty}.$ Equation (3.6) provides a connection between two mock theta functions of order five [1]. Equation (3.7) provides a connection between the eighth order mock theta a function $T_1(q)$ [9], and one Mock theta function found in [5]. Equation (3.9) related two mock theta functions of order five [1]. Equation (3.10) provides a connection between the eighth order mock theta a function $S_1(q)$ [9], and one Mock theta function found in [5]. Equation (3.11) involves a mock theta function of order three [8, pg.55].

1390 Bumps River Rd. \\*
Centerville, MA
02632 \\*
USA \\*
E-mail: alexpatk@hotmail.com, alexepatkowski@gmail.com
\end{document}